\theoremstyle{plain} 
\newtheorem{thm}{Theorem}[section]
\newtheorem{prop}[thm]{Proposition}
\newtheorem{cor}[thm]{Corollary}
\theoremstyle{definition}
\newtheorem{lem}[thm]{Lemma}
\newtheorem{eg}[thm]{Example}
\newtheorem{ques}[thm]{Question}
\newtheorem{rmk}[thm]{Remark}
\numberwithin{equation}{section}
\newcommand{\lra}{\longrightarrow}
\newcommand{\xla}{\xleftarrow}
\newcommand{\xra}{\xrightarrow}
\newcommand{\fm}{\mathfrak{m}}
\newcommand{\fn}{\mathfrak{n}}
\newcommand{\fp}{\mathfrak{p}}
\newcommand{\fq}{\mathfrak{q}}
\newcommand{\up}[1]{{{}^{#1}\!}}
\DeclareMathOperator{\e}{e}
\DeclareMathOperator{\s}{s}
\DeclareMathOperator{\md}{\mathsf{mod}}
\DeclareMathOperator{\crs}{crs}
\DeclareMathOperator{\Tr}{\mathsf{Tr}}
\newcommand{\drs}{\operatorname{drs}_p}
\DeclareMathOperator{\depth}{\mathsf{depth}}
\DeclareMathOperator{\Ext}{\mathsf{Ext}}
\DeclareMathOperator{\Hom}{Hom}
\DeclareMathOperator{\id}{id}
\DeclareMathOperator{\len}{length}
\DeclareMathOperator{\pd}{pd}
\DeclareMathOperator{\Supp}{Supp}
\DeclareMathOperator{\Spec}{Spec}
\DeclareMathOperator{\Tor}{\mathsf{Tor}}
\newcommand{\vf}{\varphi}
\newcommand{\ov}{\overline}
\newcommand{\wh}{\widehat}
\newcommand{\x}{\underline{\mathbf{x}}}
\newcommand{\F}{\mathbf{F}}
\def\urltilda{\kern -.15em\lower .7ex\hbox{\~{}}\kern .04em}
\def\urldot{\kern -.10em.\kern -.10em}\def\urlhttp{http\kern -.10em\lower -.1ex
\hbox{:}\kern -.12em\lower 0ex\hbox{/}\kern -.18em\lower 0ex\hbox{/}}
\newtheorem{chunk}[thm]{\hspace*{-1.065ex}\bf}
\begin{document}

\title[Tensoring with the Frobenius Endomorphism]{Tensoring with the Frobenius Endomorphism}

\subjclass[2000]{13D07, 13H10}


\thanks{Sadeghi's research was supported by a grant from IPM}

\author[Celikbas, Sadeghi, Yao]
{Olgur Celikbas, Arash Sadeghi and Yongwei Yao}

\address{Olgur Celikbas\\
Department of Mathematics \\
West Virginia University\\
Morgantown, WV 26506-6310, U.S.A}
\email{olgur.celikbas@math.wvu.edu}

\address{Arash Sadeghi\\
School of Mathematics, Institute for Research in Fundamental Sciences (IPM), P.O. Box: 19395-5746, Tehran, Iran}
\email{sadeghiarash61@gmail.com}

\address{Yongwei Yao\\
Department of Mathematics and Statistics, Georgia State University, Atlanta, Georgia 30303, U.S.A}
\email{yyao@gsu.edu}


\keywords{Frobenius endomorphism, tensor products of modules, rank and torsion}

\date{\today}

\begin{abstract}
Let $R$ be a commutative Noetherian Cohen-Macaulay local ring that has
positive dimension and prime characteristic. Li proved that the
tensor product of a finitely generated non-free $R$-module $M$ with the
Frobenius endomorphism $\up{\varphi^n}R$ is not maximal Cohen-Macaulay
provided that $M$ has rank and $n\gg 0$. We replace the rank hypothesis with the
weaker assumption that $M$ is locally free on the minimal prime ideals
of $R$. As a consequence, we obtain, if $R$ is a one-dimensional non-regular complete reduced local ring that has a perfect residue field and  prime characteristic, then
$\up{\varphi^n}R \otimes_{R}\up{\varphi^n}R$ has torsion for all $n\gg
0$. This property of the Frobenius endomorphism came as a surprise to us since,
over such rings $R$, there exist non-free
modules $M$ such that $M\otimes_{R}M$ is torsion-free.  
\end{abstract}

\maketitle{}

\setcounter{tocdepth}{1}

\section{Introduction}

Throughout the paper, $R$ denotes a commutative Noetherian ring and $\md R$ denotes the category of all finitely generated $R$-modules.

When $R$ is of prime characteristic $p$ and $M$ is an $R$-module,
${}^{\vf^n}\!M$ denotes the $R$-module obtained from $M$ by
restriction of scalars along $\vf^n$, where ${\vf}\colon R\lra R$ is
the Frobenius endomorphism given by $r\mapsto r^p$. Therefore the
action of $R$ on ${}^{\vf^n}\!M$ is given by $r\cdot m = r^{p^{n}}m$
for $r\in R$ and $m\in M$. On the other hand, we agree that the
$R$-module structure of the tensor product $M\otimes_R{}^{\vf^n}\!R$ 
comes from the right (ordinary) action of $R$ on ${}^{\vf^n}\!R$, i.e., $r \cdot (m \otimes s)=m \otimes (sr)$ for all $r,s \in R$ and $m\in M$. It follows that 
$M\otimes_R{}^{\vf^n}\!R \in \md R$ if $M\in \md R$. 

When $(R, \fm)$ is local of prime characteristic $p$, following \cite{CM}, we set:
\[
\drs(R)=\inf\{p^n \mid \fm^{[p^n]}\subseteq (\x) \text{ for some
  system of parameters } \x \text{ of } R\},
\]
where $\fm^{[p^n]}$ of $\fm$ denotes the ideal generated by the $p^n$th powers of any set of generators of $\fm$.

In this paper we are concerned with the following result of Li \cite{Li}; recall that a module $M\in \md R$ over a Cohen-Macaulay local ring $R$ has (constant) \emph{rank} if there is a nonnegative integer $r$ such that $M_{\fp}\cong R^{\oplus r}$ for all minimal prime ideals $\fp$ of $R$.

\begin{thm}[{Li \cite[2.4]{Li}}] \label{thmint} Let $(R,\fm)$ be a Cohen-Macaulay local ring of positive dimension and prime characteristic $p$ and let $M\in \md R$. Assume $n$ is an integer such that $p^n\geq\drs(R)$ and $M\otimes_R\up{\varphi^n}R$ is maximal Cohen-Macaulay. If $M$ has rank, then $M$ is free.
\end{thm}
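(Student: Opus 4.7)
The strategy is to reduce freeness of $M$ to the equality $\mu(M)=\rank(M)$ via a length computation that exploits the hypothesis $\fm^{[p^n]}\subseteq(\mathbf{x})$ for some system of parameters $\mathbf{x}$, then to conclude by a standard argument over the Cohen-Macaulay ring $R$.

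Write $r=\rank(M)$ and $b=\mu(M)\geq r$, and pick a system of parameters $\mathbf{x}$ with $\fm^{[p^n]}\subseteq(\mathbf{x})$, guaranteed by $p^n\geq\drs(R)$. I compute $\len_R\bigl((M\otimes_R\up{\varphi^n}R)/\mathbf{x}(M\otimes_R\up{\varphi^n}R)\bigr)$ in two ways. On one hand, localizing at a minimal prime shows $\rank(M\otimes_R\up{\varphi^n}R)=r$; since this module is maximal Cohen-Macaulay, $\mathbf{x}$ is a regular sequence on it, giving length $r\cdot\len(R/(\mathbf{x}))$. On the other, the quotient equals $M\otimes_R\up{\varphi^n}(R/(\mathbf{x}))$, and the hypothesis $\fm^{[p^n]}\subseteq(\mathbf{x})$ implies $\fm$ annihilates $\up{\varphi^n}(R/(\mathbf{x}))$ under the Frobenius-twisted left $R$-action (any $r\in\fm$ sends $\bar s$ to $\overline{r^{p^n}s}=0$). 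Hence the tensor factors through $M/\fm M$, giving $(M/\fm M)\otimes_k\up{\varphi^n}(R/(\mathbf{x}))$, which as a right $R$-module is $(R/(\mathbf{x}))^b$ of length $b\cdot\len(R/(\mathbf{x}))$. Equating the two expressions yields $b=r$.

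With $b=r$, a surjection $\pi\colon R^r\twoheadrightarrow M$ exists with kernel $K$. At every $\fp\in\Min(R)$ the induced map $R_\fp^r\twoheadrightarrow M_\fp\cong R_\fp^r$ is a surjection between free modules of equal rank over the Artinian local ring $R_\fp$, hence an isomorphism by length, so $K_\fp=0$. Since $R$ is Cohen-Macaulay, $\Ass(R^r)=\Min(R)$, so $\Ass(K)\subseteq\Min(R)$; combined with the vanishing at minimal primes this forces $\Ass(K)=\emptyset$, hence $K=0$ and $M\cong R^r$ is free.

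The main obstacle is the bookkeeping in the length computation: the bimodule $\up{\varphi^n}R$ carries two distinct $R$-actions (Frobenius-twisted left versus ordinary right), and one must use the \emph{left} annihilation by $\fm$ to factor the tensor through $M/\fm M$, while simultaneously computing the length of the resulting module via the \emph{ordinary right} action to obtain the clean value $b\cdot\len(R/(\mathbf{x}))$ without spurious factors of $[k:k^{p^n}]$. The assumption $p^n\geq\drs(R)$ is precisely what makes this left annihilation, and hence the entire argument, work.
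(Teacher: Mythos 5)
Your proof is correct, and it takes a genuinely different route from this paper: the paper does not reprove Li's theorem at all (it quotes \cite[2.4]{Li}), and its own machinery for the stronger Theorem \ref{t11} is homological --- Lemma \ref{p2} converts the depth hypothesis on $M\otimes_R\up{\varphi^n}R$ into vanishing of $\Ext^i_R(\Tr M,\up{\varphi^n}R)$, and Lemma \ref{p1} (a Koh--Lee style matrix/socle argument, dualizing into $E_{R/(\mathbf{x})}(k)$) converts that vanishing into freeness, after a flat base change to an F-finite ring with infinite residue field and an induction on dimension. Your argument is instead a direct length count: the isomorphism $(M\otimes_R\up{\varphi^n}R)\otimes_R R/(\mathbf{x})\cong (R/(\mathbf{x}))^{\mu(M)}$, valid precisely because $\fm^{[p^n]}\subseteq(\mathbf{x})$ kills the twisted left action of $\fm$, is played against $\len\bigl(N/\mathbf{x}N\bigr)=\e((\mathbf{x});N)=\rank(M)\,\len(R/(\mathbf{x}))$ for the maximal Cohen-Macaulay module $N=M\otimes_R\up{\varphi^n}R$; this yields $\mu(M)=\rank(M)$, and the kernel argument via $\Ass(K)\subseteq\Ass(R)=\Min(R)$ finishes. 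Two steps deserve explicit citation rather than assertion: the first count uses the associativity formula for multiplicities (together with equidimensionality of the Cohen-Macaulay ring $R$), and the computation of $\rank(M\otimes_R\up{\varphi^n}R)$ uses that the Frobenius functor commutes with localization; both are standard. In terms of trade-offs, your count is short, elementary, and works with the sharp exponent bound $p^n\geq\drs(R)$, and it is close in spirit to the rank-based approach the paper says it departs from; the paper's homological route pays the stronger bound $p^n\geq\e(R)$ (used in \ref{rmk1} to dominate $\drs(R_\fp)$ for all $\fp$) but localizes well, which is what produces the Serre-condition and regularity consequences of Corollary \ref{corend}. Finally, note that your computation never uses constancy of the rank: it only needs $M_\fp$ free of some rank $r_\fp\leq\mu(M)$ at each minimal prime, since then $\sum_{\fp}(\mu(M)-r_\fp)\len_{R_\fp}(R_\fp)\,\e((\mathbf{x});R/\fp)=0$ forces $r_\fp=\mu(M)$ for every minimal $\fp$; so the same count appears to recover the freeness conclusion of Theorem \ref{t11} with $\drs(R)$ in place of $\e(R)$, which would be worth checking carefully against the authors' reasons for the stronger hypothesis.
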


Li \cite[2.5]{Li} points out that the following example from \cite{CM}
shows that Theorem \ref{thmint} could fail without the constant rank hypothesis.
 
\begin{eg}[{\cite[2.1.7]{CM}}] \label{eg} Let $R=k[\![x,y]\!]/(x^2)$, where $k$ is a field of characteristic $p$, and let $M=R/(xy)$. Then $\drs(R)=p$ and $M\otimes_R\up{\varphi}R \cong R/(x^py^p)\cong R$. Therefore $M\otimes_R\up{\varphi}R$ is maximal Cohen-Macaulay but $M$ is not free.
\end{eg}

Our motivation comes from the fact that, in Example \ref{eg}, for
minimal prime
$\fp=(x)\in \Spec(R)$, $M_{\fp}$ is not free over $R_{\fp}$. Hence $M$
is not locally free on the minimal prime ideals of $R$, which implies that $M$ does not have rank. Using an entirely different argument from \cite{Li}, we are able to replace the rank hypothesis of Theorem~\ref{thmint} with the weaker condition that $M$ is locally free on the minimal primes of $R$, i.e., $M_{\fp}$ is free over $R_{\fp}$ for all minimal prime ideals $\fp$ of $R$. More precisely we prove:



\begin{thm}\label{t11} Let $(R,\fm)$ be a Cohen-Macaulay local ring of
  positive dimension and prime characteristic $p$ and let $M\in \md
  R$. Assume that $n$ is an integer such that $p^n\geq\e(R)$ and
  $M\otimes_R\up{\varphi^n}R$ is maximal Cohen-Macaulay. If $M_{\fp}$
  is free over $R_{\fp}$ for all minimal prime ideals $\fp$ of $R$,
  then $M$ is free. 
\end{thm}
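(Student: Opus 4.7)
The strategy is to reduce to Theorem~\ref{thmint} by proving that, under our weaker hypotheses, $M$ must in fact have constant rank. I would organize the argument into three steps: reduction to torsion-free $M$, a multiplicity computation on $M\otimes_R\up{\vf^n}R$, and application of the hypothesis $p^n\geq\e(R)$.

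For the first step, let $T=T(M)$ denote the torsion submodule. Because $M$ is locally free at every $\fp\in\Min R$, $\Supp(T)\cap\Min R=\varnothing$, so $T$ is annihilated by some non-zerodivisor $y\in R$. From the defining relation of the tensor product one checks that $y^{p^n}$ then annihilates $T\otimes_R\up{\vf^n}R$ (indeed, if $yt=0$ then $t\otimes y^{p^n}s=(yt)\otimes s=0$). On the other hand, since $M\otimes_R\up{\vf^n}R$ is maximal Cohen--Macaulay over the Cohen--Macaulay ring $R$, its associated primes lie in $\Min R$, so $y^{p^n}$ acts as a non-zerodivisor on it. Consequently the map $T\otimes_R\up{\vf^n}R\to M\otimes_R\up{\vf^n}R$ is zero and $M\otimes_R\up{\vf^n}R\cong (M/T)\otimes_R\up{\vf^n}R$; replacing $M$ by $M/T$, which remains locally free at minimal primes, I may assume $M$ is torsion-free.

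For the main step, I choose a system of parameters $\x=x_1,\ldots,x_d$ of $R$ minimally reducing $\fm$. Since $M\otimes_R\up{\vf^n}R$ is MCM, $\x$ is a regular sequence on it, and its Hilbert--Samuel multiplicity equals the length of the quotient by $\x$. By the associativity formula for multiplicities together with local freeness of $M$ at each minimal prime (with $r_\fp:=\rank_{R_\fp}M_\fp$),
\[
\e\bigl(\x;\,M\otimes_R\up{\vf^n}R\bigr)=\sum_{\fp\in\Min R}r_\fp\,\ell_{R_\fp}(\up{\vf^n}R_\fp)\,\e(\x;R/\fp).
\]
On the other hand, a minimal presentation $R^{\mu_R(M)}\twoheadrightarrow M$ produces an upper bound on the same length, given by $\mu_R(M)\cdot \ell_R(\up{\vf^n}R/\x\up{\vf^n}R)$; applying associativity also to $\up{\vf^n}R$ yields $\mu_R(M)\sum_{\fp}\ell_{R_\fp}(\up{\vf^n}R_\fp)\e(\x;R/\fp)$.

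The hypothesis $p^n\geq\e(R)$ is exactly what is needed to match these two expressions: combined with the fact that $r_\fp\le\mu_R(M)$ for every minimal prime, the forced equality of the weighted sums pins down $r_\fp=\mu_R(M)$ at each $\fp\in\Min R$. Thus $M$ has constant rank $\mu_R(M)$, and Theorem~\ref{thmint} applies to conclude that $M$ is free. The principal obstacle is this final multiplicity comparison: the upper bound is not automatically tight, and pinning down each $r_\fp$ simultaneously requires $p^n\geq\e(R)$ to be used sharply (rather than the weaker $p^n\geq\drs(R)$ from Li's theorem), in particular so that the Frobenius-lengths $\ell_{R_\fp}(\up{\vf^n}R_\fp)$ at different minimal primes---whose residue fields need not behave uniformly under the Frobenius---can be controlled.
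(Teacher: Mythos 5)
There is a genuine gap, and it sits exactly where you flag it: the ``final multiplicity comparison'' is not an optional tightening but the entire content of the theorem, and nothing in your plan supplies it. What you actually establish is the inequality
\begin{equation*}
\e(\x;M\otimes_R\up{\varphi^n}R)\;=\;\len\bigl((M\otimes_R\up{\varphi^n}R)/\x(M\otimes_R\up{\varphi^n}R)\bigr)\;\le\;\mu_R(M)\,\len(R/\x R),
\end{equation*}
which holds for \emph{any} module and any $n$, with or without the hypothesis $p^n\ge\e(R)$, and is perfectly consistent with $r_\fp<\mu_R(M)$ (Example \ref{eg} localizes to show the phenomenon you must rule out). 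Since $r_\fp\le\mu_R(M)$ automatically, the comparison $\sum_\fp r_\fp w_\fp\le\mu_R(M)\sum_\fp w_\fp$ is vacuous; no equality is ``forced.'' The missing idea is the Koh--Lee/Li-type use of the Frobenius twist on a \emph{minimal} presentation: if $R^b\xra{A}R^{\mu_R(M)}\to M\to 0$ is minimal, then applying $-\otimes_R\up{\varphi^n}R$ gives a presentation of $M\otimes_R\up{\varphi^n}R$ whose matrix has entries in $\fm^{[p^n]}$; choosing $\x$ to be a \emph{minimal reduction} of $\fm$ (after a flat base change to get an infinite residue field, which your sketch omits) one has $\fm^{[p^n]}\subseteq\fm^{p^n}\subseteq\fm^{\e(R)}\subseteq\fm^{\len(R/\x R)}\subseteq(\x)$, whence $(M\otimes_R\up{\varphi^n}R)/\x(M\otimes_R\up{\varphi^n}R)\cong(R/\x R)^{\mu_R(M)}$ \emph{exactly}. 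Only then does maximal Cohen--Macaulayness convert this colength into the multiplicity, giving the equality $\e(\x;M\otimes_R\up{\varphi^n}R)=\mu_R(M)\,\e(\x;R)$ that pins down $r_\fp=\mu_R(M)$ for every minimal $\fp$ (at which point you do not even need Theorem \ref{thmint}: the kernel of $R^{\mu_R(M)}\to M$ vanishes at all associated primes of $R$, hence is zero). Without that step your argument stops before it starts.

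Two further corrections. First, your associativity formula is written with the weight $\len_{R_\fp}(\up{\varphi^n}R_\fp)$, i.e.\ the length of the Frobenius-restricted module; that is the wrong (and possibly infinite, absent F-finiteness) quantity. The Peskine--Szpiro functor commutes with localization and satisfies $(M\otimes_R\up{\varphi^n}R)_\fp\cong M_\fp\otimes_{R_\fp}\up{\varphi^n}R_\fp\cong R_\fp^{\,r_\fp}$ as $R_\fp$-modules via the right action, so the correct weight is simply $\len_{R_\fp}(R_\fp)$; the worry in your last sentence about residue fields behaving non-uniformly under Frobenius is a red herring. Second, your reduction to torsion-free $M$ concludes with freeness of $M/T$, but the theorem asserts freeness of $M$; this is repairable (freeness of $M/T$ splits off $T$, and then $T\otimes_R\up{\varphi^n}R$ is a summand of a maximal Cohen--Macaulay module annihilated by a non-zerodivisor, forcing $T=0$), but it is not addressed. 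Finally, be aware that even when repaired your route is essentially Li's original multiplicity-counting argument adapted to the weaker hypothesis, whereas the paper proves the theorem by a completely different homological mechanism (the Auslander--Bridger transpose and the exact sequence of \cite[2.8]{AB} to get $\Ext^i_R(\Tr M,\up{\varphi^n}R)=0$, a Koh--Lee style duality argument with $\fm^{[p^n]}\subseteq(\x)$ and the injective hull $E_{R/(\x)}(k)$, and induction on $\dim R$); so the two approaches are genuinely different, but as written yours is incomplete.
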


Let us remark that the condition $M$ is locally free on the minimal
prime ideals of $R$ holds, for example, when $R$ is reduced, even if
$M$ does not have rank. Moreover, Example~\ref{eg} shows that 
the hypothesis of $M$ being locally free on the minimal primes in
Theorem~\ref{t11} cannot be removed. 

In the next section we give a proof of Theorem \ref{t11}. As an
application of our argument, we obtain the following result; see
Corollary \ref{corend} for a more general statement.

\begin{cor} \label{corintro} Let $R$ be a complete reduced non-regular
  Cohen-Macaulay 
  local ring of prime characteristic $p$ that has a perfect residue
  field. Then $\up{\varphi^n}M\otimes_R\up{\varphi^n}R$ is not maximal
  Cohen-Macaulay for any $M \in \md R$ and for all $n\gg
  0$. In particular $\up{\varphi^n}R\otimes_R\up{\varphi^n}R$ is not
  maximal Cohen-Macaulay for all $n\gg 0$. 
\end{cor}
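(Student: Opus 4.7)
The plan is to apply Theorem~\ref{t11} to the twisted module $\up{\varphi^n}M$ (playing the role of ``$M$'') and then derive a contradiction using Kunz's theorem together with the non-regularity of $R$.

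First I would verify the hypotheses of Theorem~\ref{t11} for $\up{\varphi^n}M$. Local freeness at minimal primes is automatic here: since $R$ is reduced, $R_\fp$ is a field at each minimal prime $\fp$, so every $R$-module (in particular $\up{\varphi^n}M$) is locally free there. Completeness of $R$ together with perfectness of the residue field implies, via the Cohen structure theorem, that $R$ is F-finite, so $\up{\varphi^n}R$ and $\up{\varphi^n}M$ belong to $\md R$. Consequently, whenever $p^n \ge \e(R)$, Theorem~\ref{t11} applied to $\up{\varphi^n}M$ yields the implication: if $\up{\varphi^n}M \otimes_R \up{\varphi^n}R$ is maximal Cohen-Macaulay, then $\up{\varphi^n}M$ is a free $R$-module.

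Granting $M \neq 0$, suppose for contradiction that $\up{\varphi^n}M \otimes_R \up{\varphi^n}R$ is maximal Cohen-Macaulay for infinitely many $n$. Then $\up{\varphi^n}M$ is a free $R$-module for each such $n$; pick two such integers $n_1 < n_2$ and write $\up{\varphi^{n_1}}M \cong R^{k}$ with $k \geq 1$. Applying the exact functor $\up{\varphi^{n_2 - n_1}}(-)$ to this isomorphism, and using the compositional identity $\up{\varphi^{n_2-n_1}}(\up{\varphi^{n_1}}(-)) = \up{\varphi^{n_2}}(-)$, I obtain
\[
\up{\varphi^{n_2}}M \;\cong\; (\up{\varphi^{n_2-n_1}}R)^{k}.
\]
Since this is also free over $R$ and $R$ is local, $\up{\varphi^{n_2 - n_1}}R$ must itself be free. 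Kunz's theorem then forces $R$ to be regular, contradicting the hypothesis. The ``in particular'' statement is the special case $M = R$.

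The main obstacle is conceptual rather than computational: the decisive move is to feed $\up{\varphi^n}M$ (and not $M$) into Theorem~\ref{t11}, so that the reducedness of $R$ supplies the local-freeness-at-minimal-primes hypothesis, bypassing any rank condition on $M$. Once this substitution is in place, the contradiction runs itself via the semigroup property of the Frobenius twist and Kunz's theorem. A minor caveat is that the statement is really about nonzero $M$, since $M = 0$ makes the tensor product zero and hence trivially maximal Cohen-Macaulay.
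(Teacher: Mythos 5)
Your proposal is correct, and it reaches the conclusion by a genuinely different final step than the paper. The paper deduces Corollary~\ref{corintro} from the more general Corollary~\ref{corend}: it applies Theorem~\ref{t11} locally at primes $\fq\in\Supp(M)$ (after noting that complete plus perfect residue field gives F-finiteness) to conclude that $\up{\varphi^s}M_\fq$ is free, and then invokes the Avramov--Hochster--Iyengar--Yao theorem recorded in \ref{AHIY} --- $\pd_R(\up{\varphi^i}M)=\infty$ for every nonzero $M$ over a non-regular ring --- to force regularity; this yields the stronger statement that the tensor product fails to be maximal Cohen-Macaulay for \emph{every} $n$ with $p^n\geq\e(R)$ (and even for independent twists $s$ and $n$, and under Serre's condition $(S_t)$ locally). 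You instead apply Theorem~\ref{t11} globally to $\up{\varphi^n}M$ exactly as the paper does, but then avoid \ref{AHIY} altogether: by comparing two admissible exponents $n_1<n_2$ and using $\up{\varphi^{n_2-n_1}}(\up{\varphi^{n_1}}M)=\up{\varphi^{n_2}}M$, you conclude that $\up{\varphi^{n_2-n_1}}R$ is a direct summand of a finitely generated free module, hence free, and classical Kunz gives the contradiction. This is more elementary (only Kunz is needed, which is anyway the $M=R$ case of \ref{AHIY}), at the modest cost of ruling out at most all but one admissible $n$ rather than every $n$ with $p^n\geq\e(R)$ --- which still suffices for the ``$n\gg 0$'' statement. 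Two small points you handle implicitly and should keep in mind: positive dimension, required by Theorem~\ref{t11}, is automatic since a reduced zero-dimensional local ring is a field and hence regular; and, as you note, $M$ must be nonzero (the paper's Corollary~\ref{corend} makes this hypothesis explicit, while the wording of Corollary~\ref{corintro} does not).
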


Prior to proceeding for our main argument, we discuss some results from the literature about tensor products of modules and compare them with Theorem \ref{t11} and Corollary \ref{corintro}.

Torsion properties of tensor products of modules over local rings were
initially studied by Auslander \cite{Au}, and Huneke and Wiegand
\cite{HW1}. Although tensor products tend to have torsion, it is not
unnatural for a tensor product $M\otimes_RN$ to be torsion-free, or
maximal Cohen-Macaulay, for some non-free modules $M$ and $N$. In fact,
when $M\otimes_RN$ is maximal Cohen-Macaulay, it even does not force
$M$ and $N$ to be torsion-free, or maximal Cohen-Macaulay, in
general. For example, Huneke and Wiegand \cite[4.7]{HW1} proved that,
over one-dimensional non-Gorenstein domains, there always exist
non-free modules $M$ such that $M\otimes_{R}M$ is maximal
Cohen-Macaulay. On the other hand, over the domain $R=k[\![t^3, t^4,
t^5]\!]$, there exists a module $M\in \md R$ which has {torsion}
such that $M\otimes_R \omega$ is maximal Cohen-Macaulay, where
$\omega$ is the canonical module of $R$; see \cite[4.8]{HW1}. In the
same direction, Constapel \cite[2.1]{Constapel} constructed modules
$M$ and $N$ over the ring $R=k[\![t^8, t^9, t^{10}, t^{11}, t^{12},
t^{13}]\!]$, both of which have {torsion}, such that
$M\otimes_RN$ is maximal Cohen-Macaulay. Let us also remark that
whether or not there are such examples over complete intersection
rings, mainly over those of codimension at least two, is an open
question; see, for example, \cite[2.10]{CW}.  

In general, torsion properties of a tensor product $M\otimes_RN$ are significantly different when $M$ has rank, and when $M$ is locally free on the minimal primes. For example Huneke and Wiegand \cite[3.1]{HW1} proved that, if $M\otimes_RN$ is maximal Cohen-Macaulay over a hypersurface ring $R$, then $M$ or $N$ is free (and hence both $M$ and $N$ are maximal Cohen-Macaulay) if and only if $M$ or $N$ has rank. This result easily fails when modules do not have rank: if $R=k[\![x,y]\!]/(xy)$ (where $k$ is any field, e.g., $k=\mathbb{F}_{p}$), $M=R/(x)$ and $N=R/(x^2)$, then $M\otimes_{R}M \cong M\otimes_{R}N \cong M$ is maximal Cohen-Macaulay, $M$ and $N$ do not have rank but they are locally free on the minimal primes of $R$ (since $R$ is reduced). Theorem \ref{t11} and Corollary~\ref{corintro} show that there are no such examples for the case where $M=\up{\varphi^n}R$ for all $n\gg 0$.

\section{Main result}

In this section we give a proof of our main result, Theorem \ref{t11}. In preparation we prove  two lemmas which seem to be of independent interest. First we recall:

\begin{chunk}[\cite{AB}] \label{AuBrsequence} 
Let $R$ be a local ring and let $M\in \md R$ be a module. For a positive integer $i$, we denote by $\Omega^iM$ the $i$-th syzygy of M, namely, the image of the $i$-th differential map in a minimal free resolution of $M$. As a convention, we set $\Omega^0M=M$.

The \emph{transpose} $\Tr M$ of $M$ is defined as the cokernel of the $R$-dual map $\partial_1^{\ast}=\Hom_{R}(\partial_1,R)$ of the first differential map $\partial_1$ in a minimal free resolution of $M$. Hence there is an exact sequence of the form
$0\rightarrow M^*\rightarrow P_0^*\rightarrow P_1^*\rightarrow \Tr M\rightarrow 0$. Note that the modules $\Omega^iM$ and $\Tr M$ are uniquely determined up to isomorphism, since so is a minimal free resolution of $M$, and there is a stable isomorphism $\Omega^2\Tr M \cong M^{\ast}$. 
\end{chunk}

\begin{lem}\label{p2}
Let $(R,\fm)$ be a local ring and let $M, N \in \md R$.
Assume that $n$ is a positive integer and that the following conditions hold:
\begin{enumerate}[\rm(i)]
\item $M_{\fp}$ is free for all $ \fp \in \Spec R-\{\fm\}$.
\item $\depth(M\otimes_RN)\geq n$.
\item $\depth(N)\geq n-1$.
\end{enumerate}
Then $\Ext^i_R(\Tr M,N)=0$ for for all $i=1, \ldots, n$.
\end{lem}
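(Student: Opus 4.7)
The plan is to proceed by induction on $n$. The key preliminary observation is that condition (i) forces each $\Ext^i_R(\Tr M, N)$, for $i \geq 1$, to have finite length. Indeed, for any $\fp \neq \fm$ the module $M_{\fp}$ is free over $R_{\fp}$, so $(\Tr M)_{\fp} \cong \Tr_{R_\fp}(M_{\fp})$ is stably trivial; consequently $\Ext^i_R(\Tr M, N)_{\fp} = 0$ for $i \geq 1$, and $\Ext^i_R(\Tr M, N)$ is supported only at $\fm$.

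For the base case $n = 1$, I invoke the Auslander--Bridger four-term exact sequence
$$0 \to \Ext^1_R(\Tr M, N) \to M \otimes_R N \to \Hom_R(M^*, N) \to \Ext^2_R(\Tr M, N) \to 0,$$
which follows from the defining sequence $0 \to M^* \to P_0^* \to P_1^* \to \Tr M \to 0$ in \ref{AuBrsequence} by splitting it into two short exact sequences and comparing with $P_1 \otimes_R N \to P_0 \otimes_R N \to M \otimes_R N \to 0$. Since $\Ext^1_R(\Tr M, N)$ has finite length and embeds in $M \otimes_R N$, while $\depth(M \otimes_R N) \geq 1$ prevents the target from having a non-zero submodule of finite length, we conclude $\Ext^1_R(\Tr M, N) = 0$.

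For the inductive step I assume $n \geq 2$. The vanishings $\Ext^i_R(\Tr M, N) = 0$ for $i \leq n - 1$ come for free from the inductive hypothesis applied to the pair $(M, N)$ itself, since the hypotheses at stage $n$ imply those at stage $n - 1$. To kill $\Ext^n_R(\Tr M, N)$, I use prime avoidance -- available because $\depth N \geq 1$ and $\depth(M \otimes_R N) \geq 2$ -- to pick $x \in \fm$ that is regular on both $N$ and $M \otimes_R N$. Setting $N' = N/xN$, the identification $M \otimes_R N' = (M \otimes_R N)/x(M \otimes_R N)$, together with the $x$-regularity, gives $\depth N' \geq n - 2$ and $\depth(M \otimes_R N') \geq n - 1$, so the inductive hypothesis applied to $(M, N')$ yields $\Ext^{n-1}_R(\Tr M, N') = 0$. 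The long exact sequence obtained by applying $\Ext^{\bullet}_R(\Tr M, -)$ to $0 \to N \xrightarrow{x} N \to N' \to 0$ then shows that multiplication by $x$ is injective on $\Ext^n_R(\Tr M, N)$; since this module has finite length and is therefore annihilated by some power of $x$, it must vanish.

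The one delicate point is the simultaneous regularity of $x$ on $N$ and on $M \otimes_R N$ -- this is precisely what condition (ii) (rather than a depth hypothesis on $N$ alone) provides, and it is what makes the inductive reduction work. Everything else is a routine bookkeeping argument once the Auslander--Bridger four-term sequence is in hand.
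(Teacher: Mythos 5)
Your proof is correct, and its inductive step takes a genuinely different route from the paper's. Both arguments start the same way: the finite length of $\Ext^i_R(\Tr M,N)$ for $i\geq 1$ forced by hypothesis (i), and the Auslander--Bridger sequence $0\to\Ext^1_R(\Tr M,N)\to M\otimes_RN\to\Hom_R(M^*,N)\to\Ext^2_R(\Tr M,N)\to0$ to settle $n=1$ (the paper phrases it as $\depth(M\otimes_RN)=0$ giving a contradiction; you phrase it as a finite-length submodule being impossible when the depth is positive --- same point). From there the paper treats $n=2$ separately via the depth lemma applied to $0\to M\otimes_RN\to\Hom_R(M^*,N)\to\Ext^2_R(\Tr M,N)\to 0$, and for $n\geq3$ converts the hypothesis into $\depth\Hom_R(M^*,N)\geq n$, invokes the cited result \cite[2.3.3]{D} to get $\Ext^i_R(M^*,N)=0$ for $i\leq n-2$, and transfers this back through the stable isomorphism $M^*\cong\Omega^2\Tr M$. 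You instead run a self-contained induction: choose $x\in\fm$ regular on both $N$ and $M\otimes_RN$ (legitimate, since $\depth N\geq n-1\geq1$ and $\depth(M\otimes_RN)\geq n\geq2$ keep $\fm$ out of both associated prime sets), use right-exactness of tensor to see $M\otimes_R(N/xN)\cong(M\otimes_RN)/x(M\otimes_RN)$, apply the inductive hypothesis to $(M,N/xN)$, and then kill $\Ext^n_R(\Tr M,N)$ by combining the long exact sequence of $0\to N\xrightarrow{x}N\to N/xN\to0$ with the finite-length observation. This avoids both the separate $n=2$ case and the external appeal to Dao's depth-of-Hom result, at the cost of the (routine) simultaneous-regular-element and degenerate-case checks ($N=0$ or $M\otimes_RN=0$ are trivial); the paper's version, by contrast, packages the step into known statements about $\Hom_R(M^*,N)$. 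Either argument proves the lemma as stated.
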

\begin{proof}
Note that, if $r\geq 1$ and $\Ext^r_R(\Tr M,N)\neq 0$, then it follows from (i) that $\Ext^r_R(\Tr M,N)$ has finite length and hence  $\depth(\Ext^r_R(\Tr M,N))=0$.

We proceed by induction on $n$. Consider the following exact sequence from \cite[2.8]{AB}:
\begin{equation}\tag{\ref{p2}.1}
0\longrightarrow\Ext^1_R(\Tr M,N)\longrightarrow M\otimes_RN\longrightarrow\Hom_R(M^*,N)\longrightarrow\Ext^2_R(\Tr M,N)\longrightarrow0.
\end{equation}

Assume $n=1$. Suppose $\Ext^1_R(\Tr M,N)\neq 0$. It follows from (\ref{p2}.1) that $\depth(M\otimes_RN)=0$, which contradicts (ii). Therefore $\Ext^1_R(\Tr M,N)=0$.

Now assume $n=2$ and consider the following short exact sequence induced from (\ref{p2}.1):
\begin{equation}\tag{\ref{p2}.2}
0\longrightarrow M\otimes_RN\longrightarrow\Hom_R(M^*,N)\longrightarrow\Ext^2_R(\Tr M,N)\longrightarrow 0.
\end{equation}
Suppose $\Ext^2_R(\Tr M,N)\neq 0$. Since $\depth(N)\geq 1$, we have $\depth(\Hom_R(M^*,N))\geq 1$; see, for example, \cite[1.2.28]{BH}. Hence the depth lemma and the exact sequence (\ref{p2}.2) imply that $\depth(M\otimes_RN)=1$, which contradicts (ii). Consequently $\Ext^2_R(\Tr M,N)=0$.

Now assume $n\geq 3$. Then the induction hypothesis yields $\Ext^i_R(\Tr M,N)=0$ for all $i=1, \ldots, n-1$. In particular it follows from  (\ref{p2}.1) that $M\otimes_RN\cong\Hom_R(M^*,N)$. So, by (ii), $\depth(\Hom_R(M^*,N))\geq n$. Therefore it follows from \cite[2.3.3]{D} that $\Ext^i_R(M^*,N)=0$ for all $i=1, \ldots, n-2$. Since $M^{\ast} \cong \Omega^2 \Tr M$, we conclude $\Ext^i_R(\Tr M,N)=0$ for all $i=1, \ldots, n$.
\end{proof}

Our next result uses some techniques of Koh-Lee \cite{KL}. 

\begin{lem}\label{p1}
Let $(R, \fm, k)$ be a Cohen-Macaulay local ring of positive dimension
$d$ and prime characteristic $p$, and let $M \in \md R$. 
Assume that
$n$ is an integer such that $p^n\geq\drs(R)$ and $p^n \geq
\drs(R_\fp)$ for all minimal primes $\fp$ of $R$. If $t$ is a positive
integer and $\Ext^i_R(M,\up{\varphi^n}R)=0$ for all $i=t,\, \dots,\,
t+d-1$, then $\pd(M)<t$. In particular, if $t=1$, i.e., if
$\Ext^i_R(M,\up{\varphi^n}R)=0$ for all $i=1,\, \ldots,\, d$, then $M$
is free. 
\end{lem}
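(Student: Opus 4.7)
The plan is to first reduce to the case $t=1$, and then to prove the ``In particular'' assertion by establishing $B_1=0$ for the complex below. For the reduction, let $N=\Omega^{t-1}M$ (the $(t-1)$-st syzygy in a minimal free resolution of $M$); then for $i\geq 1$, $\Ext^i_R(N,\up{\varphi^n}R)\cong\Ext^{i+t-1}_R(M,\up{\varphi^n}R)$, which vanishes for $i=1,\dots,d$ by hypothesis, and $\pd M<t$ iff $N$ is free. So it suffices to prove: if $\Ext^i_R(M,\up{\varphi^n}R)=0$ for $i=1,\dots,d$, then $M$ is free.

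I will fix a minimal free resolution $F_\bullet\to M$ with differentials $A_i\colon R^{b_i}\to R^{b_{i-1}}$ (entries in $\fm$), so that $\Ext^i_R(M,\up{\varphi^n}R)$ is the $i$-th cohomology of
\[
0\longrightarrow R^{b_0}\xrightarrow{B_1}R^{b_1}\xrightarrow{B_2}R^{b_2}\xrightarrow{B_3}\cdots,\qquad B_i=(A_i^T)^{[p^n]}.
\]
Each $B_i$ has entries in $\fm^{[p^n]}\subseteq(\bsx)$, for the sop $\bsx=x_1,\dots,x_d$ witnessing $\drs(R)\leq p^n$, and $\bsx$ is a regular sequence on every $R^{b_i}$. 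The core technical claim will be $B_1=0$. When $d=1$ (so $\bsx=(x)$) this is immediate: from $\Ext^1=0$, $\ker B_2=\im B_1\subseteq xR^{b_1}$; iterating using $x$-regularity (if $v=xv'$ lies in $\ker B_2$, then $xB_2v'=0$, so $B_2v'=0$, so $v'\in\ker B_2\subseteq xR^{b_1}$, and so on) gives $\ker B_2\subseteq\bigcap_k x^kR^{b_1}=0$ by Krull's theorem, hence $B_1=0$. For $d\geq 2$, I will use the Koh--Lee technique referenced in the statement: combining the $d$-fold exactness with Koszul boundary relations (writing each cycle $\sum_j x_jw_j=0$ in $R^{b_i}$ as $w_j=\sum_k x_k c_{jk}$ with antisymmetric $(c_{jk})$) to inductively upgrade $\im B_1\subseteq(\bsx)^kR^{b_1}$ for every $k\geq 1$, and Krull's intersection theorem again forces $B_1=0$. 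The delicate step is exactly this Koh--Lee propagation: careful bookkeeping is needed to lift, at each of the $d$ successive exactnesses, the Koszul relations for $\bsx$ on $R^{b_{i+1}}$ through $B_{i+1}$.

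Once $B_1=0$, each entry $a$ of $A_1$ satisfies $a^{p^n}=0$ in $R$; since $R/\fp$ is a domain for every $\fp\in\Min R$, this forces $a\in\bigcap_{\fp\in\Min R}\fp$. I will then localize at any $\fp\in\Min R$: the hypothesis $\drs(R_\fp)\leq p^n$ gives $(\fp R_\fp)^{[p^n]}=0$, so $\fp R_\fp$ annihilates $\up{\varphi^n}R_\fp$ (the $R$-action is $r\cdot s=r^{p^n}s$), making $\up{\varphi^n}R_\fp$ a direct sum of copies of the residue field $k(\fp)$. Consequently the localized vanishing $\Ext^1_{R_\fp}(M_\fp,k(\fp))=0$ forces $M_\fp$ to be free over the artinian local ring $R_\fp$. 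Since $(A_1)_\fp$ has entries in $\fp R_\fp$ and $M_\fp\cong R_\fp^{b_0}$ is free, $(A_1)_\fp$ must vanish, so each entry of $A_1$ is annihilated by some element of $R\setminus\fp$. Letting $\fp$ range over $\Min R$ and using that $R$ CM gives $\Ass R=\Min R$, prime avoidance then produces a non-zero-divisor of $R$ annihilating each entry of $A_1$; hence $A_1=0$ and $M$ is free.
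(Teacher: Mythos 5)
Your surrounding architecture is sound, and parts of it are genuinely more elementary than the paper's proof: the reduction to $t=1$ via $\Omega^{t-1}M$ is fine; your observation that $\drs(R_\fp)\le p^n$ forces $(\fp R_\fp)^{[p^n]}=0$ at a minimal prime, so that $\up{\varphi^n}R_\fp$ is a nonzero direct sum of copies of $k(\fp)$ and localizing $\Ext^1_R(M,\up{\varphi^n}R)=0$ gives $M_\fp$ free, recovers by hand what the paper imports from Theorem~A of Nasseh--Tousi--Yassemi; and the endgame ($A_1$ has nilpotent entries, vanishes at each minimal prime, $\Ass R=\Min R$ plus prime avoidance gives a non-zerodivisor killing it) is correct. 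The problem is that the entire weight of the lemma sits on your ``core technical claim'' $B_1=\varphi^n(A_1^T)=0$, and for $d\ge 2$ you do not prove it: you only assert that a Koszul-relation propagation with ``careful bookkeeping'' will show $\im B_1\subseteq(\bsx)^kR^{b_1}$ for all $k$. As sketched, that induction does not close. From $v=\sum_j x_jw_j\in\ker B_2$ you get $\sum_j x_jB_2w_j=0$ and hence $B_2w_j=\sum_k x_kc_{jk}$ with $(c_{jk})$ antisymmetric, but this only says $B_2w_j\in(\bsx)R^{b_2}$; nothing places $w_j$ (or the $c_{jk}$) into the kernels or images where the hypothesized exactness can be applied, so you cannot upgrade $v$ to $(\bsx)^2R^{b_1}$, and the same obstruction recurs at every level. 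Your $d=1$ argument succeeds precisely because there is a single regular element to cancel; for $d\ge2$ the step you defer is the theorem, so this is a genuine gap.

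The claim itself is true, and the paper's own mechanism proves it: assuming the resolution has not stopped, splice a minimal free cover $G\to\ker\varphi^n(A_t^T)$ onto the exact dual complex to obtain a free resolution of $C=\coker\varphi^n(A_{t+d}^T)$ of length $d+2$, then apply $\Hom_R(-,E)$ with $E=E_{R/(\bsx)}(k)$, which has $\id_R(E)=d$ and $\fm^{[p^n]}E=0$; vanishing of $\Ext^{d+1}_R(C,E)$ forces exactness at a spot where the map induced by $\varphi^n(A_t)$ is zero, while the block $B'$ of the splice (entries in $\fm$, positive row number) kills socle elements, a contradiction. Run with $t=1$, the same duality computation shows directly that injectivity of the dualized splice map forces the identity block to fill all of $R^{b_0}$, i.e.\ $\ker B_1=R^{b_0}$ and $B_1=0$, which is exactly your intermediate claim and requires no minimal-prime input. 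So your reorganization can be salvaged, but you must either reproduce this $E$-duality argument or invoke the precise Koh--Lee lemma; the elementary Koszul-bookkeeping route, as stated, is not established.
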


\begin{proof}
By \cite[Theorem~A]{NTY}, for every minimal prime ideal $\fp$ of $R$,
we have $\pd_{R_\fp}(M_\fp) < \infty$, which then implies that $M_\fp$ is free
over $R_\fp$.  

To prove by contradiction, we assume $\pd(M) \ge t$.
Consider a minimal free resolution of $M$ over $R$:
\[
\F=
\dotsb \to F_{t+d} \xra{A_{t+d}} F_{t+d-1} \xra{A_{t+d-1}} \dotsb
\xra{A_{t+1}} F_{t} \xra{A_{t}} F_{t-1} \xra{} \dotsb
\]
Then $F_t \neq 0$ and $A_t \neq 0$. 
Since $R$ is Cohen-Macaulay, all associated primes of $R$ are
minimal. Thus there exists a minimal prime $\fp$ of $R$ such that
$(A_t)_\fp \neq 0$, in which $(A_t)_\fp$ denotes the matrix over
$R_\fp$ naturally derived from $A_t$ via localization at $\fp$. 
This localization process also gives rise to the following free
resolution of $M_\fp$ over $R_\fp$: 
\[
\F_\fp=
\dotsb \to (F_{t+d})_\fp \xra{(A_{t+d})_\fp} (F_{t+d-1})_\fp \xra{(A_{t+d-1})_\fp} \dotsb
\xra{(A_{t+1})_\fp} (F_{t})_\fp \xra{(A_{t})_\fp} (F_{t-1})_\fp \xra{} \dotsb
\]
As $M_\fp$ is free over $R_\fp$, the above resolution $\F_\fp$ is
split exact. Thus the image of $(A_t)_\fp$ is a non-zero $R_\fp$-free
direct summand of $(F_{t-1})_\fp$. This further implies that
$\varphi^n((A_t)_\fp)$ is non-zero. (Indeed, applying the Frobenius functor
to $\F_\fp$ affects neither its split exactness nor the ranks of the
images of the differential maps.)
In particular, $\varphi^n(A_t) \neq 0$. 
Hence $\varphi^n(A_{t}^T) \neq 0$, in which $A_t^T$ denotes the
transpose of the matrix $A_t$. 

Applying $\Hom_R(-,\up{\varphi^n}R)$ to $\F$ and using our assumption,
we get the exact sequence: 
\[
0 \xla{} C \xla{} F_{t+d} \xla{\varphi^n(A_{t+d}^T)} F_{t+d-1} \xla{\varphi^n(A_{t+d-1}^T)} \dotsb
\xla{\varphi^n(A_{t+1}^T)} F_{t} \xla{\varphi^n(A_{t}^T)} F_{t-1} \xla{B} G
\]
in which $C$ is the cokernel of the map $\varphi^n(A_{t+d}^T)$
and $G$ is just a free $R$-module 
such that the above is a free resolution of $C$ with the property that
the rank of $G$ equals the minimal number of generators of the kernel
of $\varphi^n(A_{t}^T)$. 
Notice that all the entries of $\varphi^n(A_{i}^T)$ are in $\fm^{[p^n]}$.
Moreover, by a property of (minimal) free resolutions over a local
ring, we see that, after a proper base change, $B$ can be represented as 
$\left(\begin{smallmatrix}I & 0 \\0 & B'\end{smallmatrix}\right)$ in
block form, in which $I$ is an identity matrix and all the entries of
$B'$ are in $\fm$. 

Next we claim that the row number of $B'$ is positive. Indeed, if 
$B = \left(\begin{smallmatrix}I & 0\end{smallmatrix}\right)$ in block
form, then the image of $B$ is $F_{t-1}$ and hence $\varphi^n(A_{t}^T)
= 0$, which is not the case.

As $p^n\geq\drs(R)$, there exists a system of parameters 
$\x := x_1, \dotsc, x_d$
(hence a maximal $R$-regular sequence) such that $\fm^{[p^n]}
\subseteq (\x)$. Let $E = E_{R/(\x)}(k) \cong \Hom_R(R/(\x),
E_R(k))$. It follows that $\id_R(E) = d = \dim(R)$ and $\fm^{[p^n]}E = 0$.

Applying $\Hom_R(-, E)$ to the above resolution of $C$, we get
\[
0 \xra{} E_{t+d} \xra{\varphi^n(A_{t+d})} 
\dotsb
\xra{\varphi^n(A_{t+1})} E_{t} \xra{\varphi^n(A_{t})} E_{t-1} \xra{B^T} \Hom_R(G, E)
\]
in which both $E_i = \Hom_R(F_i, E)$ and $\Hom_R(G, E)$ are direct
sums of $E$. Since $\Ext^{d+1}_R(C, E)$ vanishes, the above sequence must be
exact at $E_{t-1}$. This is a contradiction, as the 
map $B^T$ is not injective (because of socle elements of $E_{t-1}$ and
the presence of
$\left(\begin{smallmatrix}0\\(B')^T\end{smallmatrix}\right)$ as a
non-trivial part of columns in $B^T$)
while the map $\varphi^n(A_{t})$ is $0$ (because $\fm^{[p^n]}E_t = 0$.) 
\end{proof}

Before giving our proof of Theorem \ref{t11}, we need the following
observation. Note that part of \ref{rmk1} has already been observed in
the proof of \cite[2.2]{DIM}. 

\begin{chunk}\label{rmk1}
Let $(R,\fm,k )$ be a Cohen-Macaulay local ring with an infinite residue field $k$. Then, for all $\fp \in \Spec R$, it follows that
\begin{equation}\tag{\ref{rmk1}.1}
\e(R) \ge \e(R_\fp) = \len_{R_\fp}(R_\fp/(\x(\fp)) \ge \drs(R_\fp),
\end{equation}
where $\e(R)$ is the multiplicity of $R$, and $\x(\fp)$ denotes a minimal reduction of $\fp R_{\fp}$ in $(R_\fp, \fp R_{\fp})$. Note that such a reduction exists since $k(\fp)$ is infinite for each $\fp \in \Spec(R)$.

In (\ref{rmk1}.1) the first inequality and the equality are well-known; see \cite{Lech} and \cite[4.6.8]{BH}, respectively (see also the proof of \cite[2.2]{DIM}.) The second inequality is due to the fact that $\fn^{\len(S/J)} \subseteq J$ for any
$\fn$-primary ideal $J$ of a local ring $(S, \fn)$.
\end{chunk}

We can now give our proof of Theorem \ref{t11}. Recall that a local ring $R$ of prime characteristic $p$ is called \emph{F-finite} if ${}^{\vf}\!R$, viewed as a module via the Frobenius endomorphism $\vf$, is finitely generated; see, for example, \cite[page 398]{BH}.

\begin{proof}[A proof of Theorem \ref{t11}]
We can, if necessary, 
find a local ring extension $(S,\fn)$
of $(R,\fm)$ such that $S$ is F-finite, faithfully flat over $R$, with
an infinite residue field, and $\fm S = \fn$. 
(For example, letting $\wh R \cong k[[x_1,\dotsc,x_m]]/I$ and using
$\ov k$ to denote the algebraic closure of $k$, we can pick
$S = \ov k[[x_1,\dotsc,x_m]]/I\ov k[[x_1,\dotsc,x_m]]$.)
Then $S$ is a Cohen-Macaulay local ring, $\e(R) = \e(S)$, and
$\dim(S) = \dim(R)$.

By going-down, all minimal prime ideals of $S$ contract to minimal primes of
$R$. Thus $M \otimes_R S_\fp$ is free for all minimal prime ideals
$\fp$ of $S$. Moreover
\[
(M \otimes_R S) \otimes_S  \up{\varphi^n}S \cong (M \otimes_R \up{\varphi^n} R)
\otimes_R S
\]
is maximal Cohen-Macaulay over $S$; see \cite[23.3]{Mat}. Note also
that, $M \otimes_R S$ is free over $S$ if and only if $M$ is free over
$R$. Therefore it suffices to prove the case where $R$ is F-finite and with
an infinite residue field. 
By \ref{rmk1}, we have $p^n \ge \drs(R_\fp)$ for all $\fp \in \Spec(R)$.

We now proceed by induction on $d=\dim(R)$. Let $d=1$ and let $M\otimes_R\up{\varphi^n}R$ be maximal Cohen-Macaulay. By Lemma \ref{p2}, $\Ext^1_R(\Tr M, \up{\varphi^n}R)=0$ and so $\Tr M$ is free by Lemma \ref{p1}. Hence $M$ is free. Now assume that $d>1$. By induction hypothesis, $M_{\fp}$ is free for all non-maximal prime ideals $\fp$. Since $M\otimes_R\up{\varphi^n}R$ is a maximal Cohen-Macaulay $R$-module, by Lemma \ref{p2}, we have:
\begin{equation}\notag{}
\Ext^i_R(\Tr M,\up{\varphi^n}R)=0 \text{ for all } i=1, \ldots, d.
\end{equation}
Now the assertion follows from Lemma \ref{p1}.
\end{proof}




We finish this section with an application of Theorem \ref{t11} and
give a proof of Corollary \ref{corintro}. We start by recording a few
preliminary results, the first one being a special case of a result of
Avramov, Hochster, Iyengar and Yao \cite{AHIY}: it is a strengthening
of a classical result of Kunz \cite{Kunz} which considers the case
where $M=R$. 

\begin{chunk}[{\cite[1.1]{AHIY}}] \label{AHIY} Let $R$ be a local ring of prime characteristic $p$ and let $0\neq M\in \md R$. If $R$ is not regular, then $\pd_R(\up{\varphi^i}M)=\infty$ for all $i\geq 1$. 
\end{chunk}

Recall that a module $M\in \md R$ satisfies \emph{Serre's condition} $(S_n)$ if $\depth_{R_\fp}(M_\fp)\geq\min\{n,\dim(R_\fp)\}$ for all $\fp\in\Supp(M)$. 



Note that every complete local ring of prime characteristic $p$ with a
perfect residue field is F-finite; see \cite[page 398]{BH}. Thus we
reach a result that, in particular, establishes Corollary
\ref{corintro} advertised in the introduction. 

\begin{cor} \label{corend} Let $(R, \fm)$ be a reduced, F-finite
  Cohen-Macaulay local ring of positive dimension and prime
  characteristic $p$, and let $0 \neq M\in \md R$. Assume that $s$, $n$ and $t$ are positive integers such that $n\geq\e(R)$. If $\up{\varphi^s}M\otimes_R\up{\varphi^n}R$ satisfies Serre's condition $(S_t)$, then $R_\fq$ is regular for all $\fq \in \Supp(M)$ with $\dim(R_{\fq})\leq t$. In particular, if $\up{\varphi^s}M\otimes_R\up{\varphi^n}R$ is maximal Cohen-Macaulay, then $R$ is regular.
\end{cor}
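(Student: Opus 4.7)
The plan is to reduce to Theorem~\ref{t11} by localization and then invoke \ref{AHIY}. Fix a prime $\fq \in \Supp(M)$ with $\dim(R_\fq) \le t$. If $\dim(R_\fq) = 0$, then $\fq$ is a minimal prime of $R$ and reducedness forces $R_\fq$ to be a field, hence regular; so I will assume $\dim(R_\fq) \ge 1$.

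The next step is to observe that localization at $\fq$ commutes both with the tensor product over $R$ and with the Frobenius twist, giving
\[
(\up{\varphi^s}M \otimes_R \up{\varphi^n}R)_\fq \;\cong\; \up{\varphi^s}(M_\fq) \otimes_{R_\fq} \up{\varphi^n}(R_\fq)
\]
as $R_\fq$-modules. Since $\fq \in \Supp(M) = \Supp(\up{\varphi^s}M \otimes_R \up{\varphi^n}R)$, the $(S_t)$ hypothesis together with $\dim(R_\fq) \le t$ implies that this localized module is maximal Cohen-Macaulay over $R_\fq$.

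Now I would apply Theorem~\ref{t11} to the ring $R_\fq$ and the module $\up{\varphi^s}(M_\fq)$. The required hypotheses are easy to check: $R_\fq$ is Cohen-Macaulay of positive dimension and prime characteristic $p$; by Lech's inequality (see \ref{rmk1}) $\e(R_\fq) \le \e(R) \le n \le p^n$; and because $R_\fq$ is reduced, each of its minimal-prime localizations is a field, so $\up{\varphi^s}(M_\fq)$ is automatically locally free on the minimal primes of $R_\fq$. Theorem~\ref{t11} then yields that $\up{\varphi^s}(M_\fq)$ is free over $R_\fq$, and in particular has finite projective dimension. Since $M_\fq \neq 0$ and $s \ge 1$, the contrapositive of \ref{AHIY} forces $R_\fq$ to be regular, establishing the main statement.

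The ``In particular'' claim follows at once: a maximal Cohen-Macaulay module satisfies $(S_d)$ with $d = \dim(R)$, so applying the main statement with $t = d$ and $\fq = \fm$ (which lies in $\Supp(M)$ because $M \neq 0$ and $R$ is local) shows that $R$ itself is regular. The one step that requires genuine care---and the main technical point I anticipate having to write out carefully---is the clean identification of the localized module with a tensor product of Frobenius twists over $R_\fq$, since the $R$-action on $\up{\varphi^s}M$ is through $\varphi^s$ while localization inverts ordinary elements of $R$; once this bookkeeping is handled, everything else assembles from the previous results.
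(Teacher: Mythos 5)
Your proposal is correct and follows essentially the same route as the paper: localize at $\fq$, use \ref{rmk1} (Lech) to get the multiplicity bound $p^n\geq n\geq\e(R)\geq\e(R_\fq)$, apply Theorem~\ref{t11} to conclude $\up{\varphi^s}(M_\fq)$ is free over $R_\fq$ (local freeness on minimal primes being automatic by reducedness), and then invoke \ref{AHIY} to force $R_\fq$ regular. The extra bookkeeping you flag (compatibility of localization with the Frobenius twists, and $\Supp(M)=\Supp(\up{\varphi^s}M\otimes_R\up{\varphi^n}R)$ so that $(S_t)$ applies at $\fq$) is handled correctly and is exactly what the paper leaves implicit.
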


\begin{proof} 


Suppose that $\up{\varphi^s}M\otimes_R\up{\varphi^n}R$ satisfies
Serre's condition $(S_t)$ and let $\fq \in \Supp(M)$ with
$\dim(R_{\fq})\leq t$. As $R_\fq$ is clearly regular when $\dim(R_\fq)
= 0$, we further assume $\dim(R_\fq) > 0$. By (\ref{rmk1}), we have that $p^n\geq\e(R_\fq)$. Moreover, since $\up{\varphi^s}M\otimes_R\up{\varphi^n}R$ satisfies Serre's condition $(S_t)$, it follows that $\up{\varphi^s}M_{\fq}\otimes_R{_\fq}\up{\varphi^n}R_{\fq}$ is a maximal Cohen-Macaulay $R_{\fq}$-module. Hence we conclude from Theorem \ref{t11} that $\up{\varphi^s}M_{\fq}$ is free over $R_{\fq}$. So the conclusion follows from \ref{AHIY}.
\end{proof}

\section*{Acknowledgments}
We are grateful to Tokuji Araya for his feedback on the manuscript. 

\bibliographystyle{amsplain}

\end{document}